\definecolor{darkblue}{rgb}{0, 0, .4}
\definecolor{grey}{rgb}{.7, .7, .7}
  \newcommand{\href}[2]{#2}
  \newcommand{\url}[2]{#2}
\newtheorem{theorem}{Theorem}[section]
\newtheorem{lemma}[theorem]{Lemma}
\theoremstyle{definition}
\theoremstyle{remark}
\numberwithin{equation}{section}
\theoremstyle{theorem}
\newtheorem{corollary}[theorem]{Corollary}
\newcommand{\N}[0]{\mathbb{N}}
\newcommand{\ra}{\rightarrow}
\newcommand{\SN}{\mathfrak{S}}
\newcommand{\pf}[1]{\pi|_{[#1]}}
\newcommand{\la}[0]{\lambda}
\renewcommand{\t}[0]{\theta}
\begin{document}

\title{Opportunity costs in the game of best choice}

\begin{abstract}
The game of best choice, also known as the secretary problem, is a model for sequential decision making with many variations in the literature.  Notably, the classical setup assumes that the sequence of candidate rankings is uniformly distributed over time and that there is no expense associated with the candidate interviews.  Here, we weight each ranking permutation according to the position of the best candidate in order to model costs incurred from conducting interviews with candidates that are ultimately not hired.  We compare our weighted model with the classical (uniform) model via a limiting process.  It turns out that imposing even infinitesimal costs on the interviews results in a probability of success that is about 28\%, as opposed to $1/e \approx 37\%$ in the classical case.  
\end{abstract}

\author[M. Crews]{Madeline Crews}
\author[B. Jones]{Brant Jones}
\author[K. Myers]{Kaitlyn Myers}
\author[L. Taalman]{Laura Taalman}
\author[M. Urbanski]{Michael Urbanski}
\author[B. Wilson]{Breeann Wilson}
\address{Department of Mathematics and Statistics, MSC 1911, James Madison University, Harrisonburg, VA 22807}
\email{\href{mailto:jones3bc@jmu.edu}{\texttt{jones3bc@jmu.edu}}}
\urladdr{\url{http://educ.jmu.edu/\~jones3bc/}}


\date{\today}

\maketitle

\bigskip
\section{Introduction}

The game of best choice, or secretary problem, is a model for sequential
decision making.  In the simplest variant, an {\bf interviewer} evaluates a
pool of $N$ {\bf candidates} one by one.  After each interview, the interviewer
ranks the current candidate against all of the candidates interviewed so far,
and decides whether to {\bf accept} the current current candidate (ending the
game) or to {\bf reject} the current candidate (in which case, they cannot be
recalled later).  The goal of the game is to hire the best candidate out of
$N$.  It turns out that the optimal strategy for large $N$ is to reject an
initial set of $N/e$ candidates and hire the next candidate who is better than
all of them (or the last candidate if no subsequent candidate is better).  The
probability of hiring the best candidate out of $N$ with this strategy also
approaches $1/e$.  See \cite{gilbert--mosteller} for an introduction to these
results.  Many other variations and some history have been given in
\cite{ferguson} and \cite{freeman}. 

We model interview orderings as permutations.  The permutation $\pi$ of
$N$ is expressed in one-line notation as $[\pi_1 \pi_2 \cdots \pi_N]$
where the $\pi_i$ consist of the elements $1, 2, \ldots, N$ (so each element
appears exactly once).  In the best choice game, $\pi_i$ is the rank of the
$i$th candidate interviewed in reality, where rank $N$ is best and $1$ is
worst.  What the player sees at each step, however, are relative rankings.
For example, corresponding to the interview order $\pi = [2516374]$, the player
sees the sequence of permutations $1, 12, 231, 2314, 24153, 241536, 2516374$
and must use only this information to determine when to accept a candidate,
thereby ending the game.  

Let $\SN_N$ be the set of all permutations of size $N$.  Given some statistic
$c : \SN_N \rightarrow \N$ and a positive real number $\theta$, we define a
discrete probability distribution on $\SN_N$ via
\[ f(\pi) = \frac{ \theta^{c(\pi)} }{ \sum_{\pi \in \SN_N} \theta^{c(\pi)} }. \]
Given a sequence of $i$ distinct integers, we define its {\bf flattening} to be
the unique permutation of $\{1, 2, \ldots, i\}$ having the same relative order
as the sequence.  Given a permutation $\pi$, define the {\bf $i$th prefix
flattening}, denoted $\pf{i}$, to be the permutation obtained by flattening the
sequence $\pi_1, \pi_2, \ldots, \pi_i$.  In the {\bf weighted game of best
choice}, introduced in \cite{jones18w}, some $\pi \in \SN_N$ is chosen
randomly, with probability $f(\pi)$, and each prefix flattening $\pf{1},
\pf{2}, \ldots$ is presented sequentially to the player.  If the player stops
at value $N$, they win; otherwise, they lose.  We are interested in calculating
the win probability, under optimal play, for finite $N$ as well as in the limit
as $N \rightarrow \infty$.

In this note, we follow a suggestion by the first author to let $c(\pi)$ be the
position of the largest element in $\pi$, indexed starting from $0$; that is,
$c(\pi) = \pi^{-1}(N) - 1$.  Equivalently, this is the number of ``wasted''
interviews required before we can hire the best candidate.  Setting $\t < 1$
has the effect of imposing a multiplicative cost of $\t$ on each wasted
interview.  For example, the best candidate being hired immediately will
contribute $1 = \t^0$ (before normalization) to the win probability, whereas
each failed interview reduces the contribution of an eventually successful hire
by a factor of $\t$.  This weighted model is relevant when the interviews
themselves are costly, or if time spent interviewing detracts from the time
spent working productively such as when the position being filled is only for a
limited term or requires a substantial training investment.  Also, observe that
when $\theta = 1$, we recover the complete uniform distribution on $\SN_N$,
corresponding to the classical model.

We obtain some interesting behavior vis-\`a-vis the classical model.  The
optimal strategy is still positional, for which we reject about $0.435/(1-\t)$
initial candidates and select the next best candidate.  As $N \ra \infty$ and
$\t \ra 1$, however, this strategy succeeds about $28\%$ of the time even
though we have a $1/e \approx 37\%$ success rate {\it at } $\t = 1$.  That is,
the asymptotically optimal strategy does not vary continuously with the
parameter $\t$ which seems to limit the durability of any ``policy advice''
derived from the classical model (such as e.g. \cite{golden}).  We found a
similar discontinuity in the {\em optimal strategy} for the Mallows model in
\cite{jones18w}, although the {\em success probability} there still approached
$1/e$.  In the present model, both the strategy and probability of success are
discontinuous.  Evidently, there is a ``price'' of about $8.6\%$ in the
asymptotic success rate for imposing any wasted interview penalty, no matter
how small.

Although there is an established ``full-information'' version of the game in
which the player observes values from a given distribution, it seems that only
a few papers have considered nonuniform rank distributions for the secretary
problem.  Pfeifer \cite{pfeifer} considers the case where interview ranks are
independent but have cumulative distribution functions containing parameters
determined by the interview positions.  The paper \cite{reeves--flack}
considers an explicit continuous probability distribution that allows for
dependencies between nearby arrival ranks via a single parameter.  Inspired by
approximation theory, the paper \cite{kleinberg--etal} studies some general
properties of non-uniform rank distributions in the secretary problem.  Our
work also fits into a recent stream of asymptotic results for random
permutations by researchers in algebraic combinatorics such as
\cite{miner-pak,elizalde,bouvel}.

\bigskip
\section{The Model}

The {\bf left-to-right maxima} in a permutation $\pi$ consist of elements
$\pi_j$ that are larger in value than every element $\pi_i$ to the left (i.e.
for $i<j$).  In the game of best choice, it is never optimal to select a
candidate that is not a left-to-right maximum.  A {\bf positional} strategy for
the game of best choice is one in which the interviewer transitions from
rejection to hiring based only on the position of the interview (as opposed to
adjusting the transition based on the prefix flattenings that are
encountered).  More precisely, the interviewer may play the {\bf
$r$-positional} strategy on a permutation $\pi$ by rejecting candidates $\pi_1,
\pi_2, \ldots, \pi_r$ and then accepting the next left-to-right maximum
thereafter.  We say that a particular interview rank order is {\bf
$r$-winnable} if transitioning from rejection to hiring after the $r$th
interview captures the best candidate.  For example, $574239618$ is
$r$-winnable for $r = 2, 3, 4$, and $5$.  It is straightforward to verify that
a permutation $\pi$ is $r$-winnable precisely when position $r$ lies between
the last two left-to-right maxima in $\pi$.  

It follows from the results in \cite[Section 3]{jones18w} that the optimal
strategy in our game of best choice is positional \footnote{Briefly, our
statistic $c(\pi) = \pi^{-1}(N)-1$ is essentially prefix equivariant
\cite[Definition 3.2]{jones18w} in the sense that $c(\pi) - c(\sigma_q \cdot
\pi) = 0 = c(12 \cdots k) - c(q)$ for all eligible prefixes $q$.  This is
enough to obtain the results in \cite[Theorem 3.4]{jones18w} and subsequently
\cite[Theorem 3.7]{jones18w}.}, and we let
\[ W_N(r)=\sum_{\text{$r$-winnable $\pi \in \SN_N$}} \t^{\pi^{-1}(N)-1}. \] 

\begin{theorem}\label{t:rec}
We have the recurrence 
\[ W_{N}(r) = (N-1) W_{N-1}(r) + r (N-2)! \t^{N-1} \]
with initial conditions $W_1(0) = 1$ and $W_1(r) = 0$ for all $r \geq 1$.
\end{theorem}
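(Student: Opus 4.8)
The plan is to prove the recurrence by induction on $N$, using the map that deletes the last entry of a permutation. I would start from the characterization recalled above: $\pi$ is $r$-winnable exactly when position $r$ falls between the last two left-to-right maxima of $\pi$, and the last left-to-right maximum of $\pi \in \SN_N$ is the value $N$, which sits in position $\pi^{-1}(N)$. So $W_N(r)$ is the sum of $\t^{\pi^{-1}(N)-1}$ over the $r$-winnable $\pi \in \SN_N$, and I would split that sum according to whether or not $\pi_N = N$.

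For the $r$-winnable $\pi$ with $\pi_N \neq N$, the value $N$ appears among $\pi_1, \ldots, \pi_{N-1}$, so the final entry $\pi_N$ is not a left-to-right maximum; hence the left-to-right maxima of $\pi$ — their values, their positions, and in particular the position of $N$ — coincide with those of the prefix $\pi_1 \cdots \pi_{N-1}$, and therefore with those of its flattening $\pi' \in \SN_{N-1}$. Consequently $\pi$ is $r$-winnable iff $\pi'$ is, and $\pi^{-1}(N) = (\pi')^{-1}(N-1)$, so $\pi$ and $\pi'$ have the same weight. Since deleting the last entry and flattening is an $(N-1)$-to-$1$ surjection from $\{\pi \in \SN_N : \pi_N \neq N\}$ onto $\SN_{N-1}$, with the fibre over $\pi'$ indexed by the choice of $\pi_N \in \{1, \ldots, N-1\}$, this case contributes exactly $(N-1)W_{N-1}(r)$ to $W_N(r)$.

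For the $r$-winnable $\pi$ with $\pi_N = N$, we have $\pi^{-1}(N) = N$, so each such $\pi$ contributes $\t^{N-1}$. When $N \geq 2$, the penultimate left-to-right maximum of $\pi$ is the largest of $\pi_1, \ldots, \pi_{N-1}$, i.e. the value $N-1$, so for $0 \le r \le N-1$ the permutation is $r$-winnable precisely when $N-1$ occupies one of the first $r$ positions; there are $r\,(N-2)!$ such permutations, contributing $r\,(N-2)!\,\t^{N-1}$. Adding the two cases gives the recurrence for $0 \le r \le N-1$; the base case $N=1$ is immediate from the definitions, and for $r \ge N$ one simply has $W_N(r) = 0$ because the $r$-positional strategy then rejects every candidate.

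The counting steps (the fibre size, and the count of placements of $N-1$) are routine; the step I expect to require the most care is the claim that, when $\pi_N \neq N$, deleting the last entry leaves the entire left-to-right-maximum structure and the position of the largest value unchanged — this is exactly where the hypothesis $\pi_N \neq N$ is used, and it is what makes $r$-winnability and the weight descend to $\SN_{N-1}$ cleanly. As an alternative to the deletion bijection, one could first show that the number of $r$-winnable $\pi \in \SN_N$ with $\pi^{-1}(N) = j$ equals $r(N-1)!/(j-1)$ for $r < j \le N$ — using that the relative order of the first $j-1$ entries is equidistributed, so its maximum lands in the first $r$ positions a fraction $r/(j-1)$ of the time — and then read off the recurrence by comparing the resulting expressions for $W_N(r)$ and $W_{N-1}(r)$; but the deletion argument looks shorter.
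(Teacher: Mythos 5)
Your proposal is correct and follows essentially the same route as the paper's proof: both split the $r$-winnable permutations according to whether $N$ occupies the last position, use the $(N-1)$-to-$1$ delete-and-flatten map in the first case, and count placements of $N-1$ among the first $r$ positions in the second. Your write-up just spells out more explicitly why the weight $\t^{\pi^{-1}(N)-1}$ and the winnability condition descend under deletion, which the paper leaves implicit.
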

\begin{proof}
There are two cases for the $r$-winnable permutations $\pi$ of $N$.  If $N$
does not lie in the last position, then we may view the initial segment of
$\pi$ uniquely as an $r$-winnable permutation of $N-1$ by flattening.  Since
there are $N-1$ possible values for the last position, this case contributes $(N-1)
W_{N-1}(r)$ to $W_N(r)$.  If $N$ lies in the last position, then $\pi$ will be
winnable if and only if $N-1$ lies in one of the first $r$ positions of $\pi$.
For each of these choices, we may permute the remaining entries in $(N-2)!$
ways, so these contribute $r (N-2)! \t^{N-1}$ all together. 
\end{proof}

\begin{corollary}
We have 
\[ W_N(r) = \begin{cases} 
(N-1)! & \text{ if $r=0$ } \\
(N-1)! \ r \sum\limits_{i=r}^{N-1} \frac{\t^i}{i} & \text{ if $1 \leq r \leq N-1$ } \\ 
\end{cases} \]
\end{corollary}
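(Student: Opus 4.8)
The plan is to prove both branches by induction on $N$, substituting the recurrence of Theorem~\ref{t:rec} into the claimed formulas and checking that they reproduce themselves. The case $r = 0$ is quickest: the recurrence specializes to $W_N(0) = (N-1)W_{N-1}(0)$, so with $W_1(0) = 1$ an immediate induction gives $W_N(0) = (N-1)!$, which is the first branch.

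For $1 \le r \le N-1$ I would fix $r$ and induct on $N$, starting from $N = r+1$. The base case needs the value $W_r(r)$, which I claim is $0$: by the criterion recalled just before Theorem~\ref{t:rec}, an $r$-winnable permutation must have position $r$ strictly to the left of its last left-to-right maximum, and this is impossible in a permutation of size $r$ since there position $r$ is already the last position. Feeding $W_r(r) = 0$ into the recurrence gives $W_{r+1}(r) = r(r-1)!\,\t^r = r!\,\t^r$, which agrees with $(N-1)!\,r\sum_{i=r}^{N-1}\t^i/i$ at $N = r+1$. For the inductive step take $N \ge r+2$, so that $r \le (N-1)-1$ and the inductive hypothesis applies to $W_{N-1}(r)$ in this same branch; then
\[ W_N(r) = (N-1)\,W_{N-1}(r) + r(N-2)!\,\t^{N-1} = (N-1)!\,r\left(\sum_{i=r}^{N-2}\frac{\t^i}{i} + \frac{\t^{N-1}}{N-1}\right), \]
and combining the two terms inside the parentheses yields the claimed $\sum_{i=r}^{N-1}\t^i/i$.

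The computation is entirely routine, so the only point that needs any care is the base case $N = r+1$ of the second branch: there the inductive hypothesis is not available for $W_{N-1}(r) = W_r(r)$, and one must argue separately (as above, or by extending the convention $W_N(r) = 0$ for $r \ge N$) that this term vanishes. With that observation in place, the partial-sum manipulation in the display closes everything.
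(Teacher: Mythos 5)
Your proof is correct and is exactly the induction on $N$ via Theorem~\ref{t:rec} that the paper invokes (the paper gives no details beyond ``follows by induction''). Your attention to the base case $N=r+1$, where one needs $W_r(r)=0$ rather than the stated initial condition $W_1(r)=0$, is a worthwhile point of care that the paper glosses over, and your justification of it from the left-to-right-maxima criterion is sound.
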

\begin{proof}
This follows from Theorem~\ref{t:rec} by induction.
\end{proof}

\begin{theorem}\label{t:succN}
Fix some positive $\t \neq 1$.  The probability of winning the game of best choice using the strategy that rejects $r$ initial candidates is
\[ P_r(N, \t) = \frac{r (1-\t) \sum_{i=r}^{N-1} \frac{\t^i}{i} }{1-\t^N} \]
if $r > 0$ and is $\frac{1-\t}{1-\t^N}$ if $r = 0$.
\end{theorem}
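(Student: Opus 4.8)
The plan is to obtain $P_r(N,\t)$ as the ratio $W_N(r)/Z_N$, where $Z_N := \sum_{\pi \in \SN_N} \t^{c(\pi)}$ is the normalizing constant from the definition of $f$. Indeed, by the discussion in Section~2 the permutations on which the $r$-positional strategy wins are exactly the $r$-winnable ones, and each such $\pi$ contributes $f(\pi) = \t^{c(\pi)}/Z_N = \t^{\pi^{-1}(N)-1}/Z_N$ to the win probability; summing gives $\sum_{\text{$r$-winnable }\pi} f(\pi) = W_N(r)/Z_N$. So the theorem reduces to computing $Z_N$ and then invoking the Corollary.

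First I would compute $Z_N$ by partitioning $\SN_N$ according to the position $k = \pi^{-1}(N)$ of the largest entry. For each $k \in \{1, \dots, N\}$ there are exactly $(N-1)!$ permutations with $\pi^{-1}(N) = k$, and each carries weight $\t^{k-1}$, so
\[ Z_N = (N-1)! \sum_{k=1}^{N} \t^{k-1} = (N-1)!\,\frac{1-\t^N}{1-\t}, \]
where the geometric sum is valid since $\t \neq 1$. Next I would substitute the closed form from the Corollary. For $1 \le r \le N-1$,
\[ P_r(N,\t) = \frac{W_N(r)}{Z_N} = \frac{(N-1)!\,r\sum_{i=r}^{N-1}\t^i/i}{(N-1)!\,(1-\t^N)/(1-\t)} = \frac{r(1-\t)\sum_{i=r}^{N-1}\t^i/i}{1-\t^N}, \]
the factor $(N-1)!$ cancelling. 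For $r = 0$ we have $W_N(0) = (N-1)!$, hence $P_0(N,\t) = (1-\t)/(1-\t^N)$, as claimed. (For $r \ge N$ the strategy never wins, matching the empty sum, so the stated range is exhaustive.)

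There is no genuine obstacle: the argument is just the computation of $Z_N$ together with the bookkeeping identification of the winning set. The only place warranting a line of care is justifying that the win probability equals $W_N(r)/Z_N$ exactly — i.e., that under the $r$-positional strategy the event ``the best candidate is hired'' coincides with ``$\pi$ is $r$-winnable'' — which is precisely the characterization recorded in Section~2 (position $r$ lies between the last two left-to-right maxima).
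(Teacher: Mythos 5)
Your proposal is correct and follows essentially the same route as the paper: identify the win probability as $W_N(r)/Z_N$, compute the normalizing constant $Z_N = (N-1)!(1+\t+\cdots+\t^{N-1}) = (N-1)!\,\frac{1-\t^N}{1-\t}$ by conditioning on the position of the largest entry, and invoke the Corollary. You simply spell out the computation of $Z_N$ and the $r=0$ case in slightly more detail than the paper does.
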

\begin{proof}
By definition, the probability of winning is
\[ \biggm( \sum\limits_{r\text{-winnable } \pi \in \SN_N} \theta^{c(\pi)} \biggm)\Big/\biggm( \sum\limits_{\pi \in \SN_N}
\theta^{c(\pi)} \biggm)  = \frac{W_N(r)}{(N-1)!(1+\t+\t^2+\cdots+\t^{N-1})}. \]
The result then follows from the previous corollary.
\end{proof}

\bigskip
\section{Results}

Suppose now that $\t < 1$, and take the limit as $N \ra \infty$.
Then the probability of success for the strategy that initially rejects $r$
candidates becomes
\[ P_r(\t) = r (1-\t) \sum_{i=r}^{\infty} \frac{\t^i}{i}. \]
We obtain a curve for each nonnegative value of $r$ (interpreting $P_0$ as
$1-\t$), the first several of which we have plotted in Figure~\ref{f:curves}.
For each value of $\t$, one of the curves is maximal, yielding the optimal strategy and probability of success.  For example, 
\[ \begin{tabular}{|c|cccc|}
	\hline
	$r = $ & 0 & 1 & 2 & 3 \\
	\text{ is optimal for $\t \in$} & (0, 0.6321] & [0.6321, 0.7968] &  [0.7968, 0.8609] &  [0.8609, 0.8945] \\
	\hline
\end{tabular} \]
\begin{figure}[t]
\begin{center}
\includegraphics[scale=0.48]{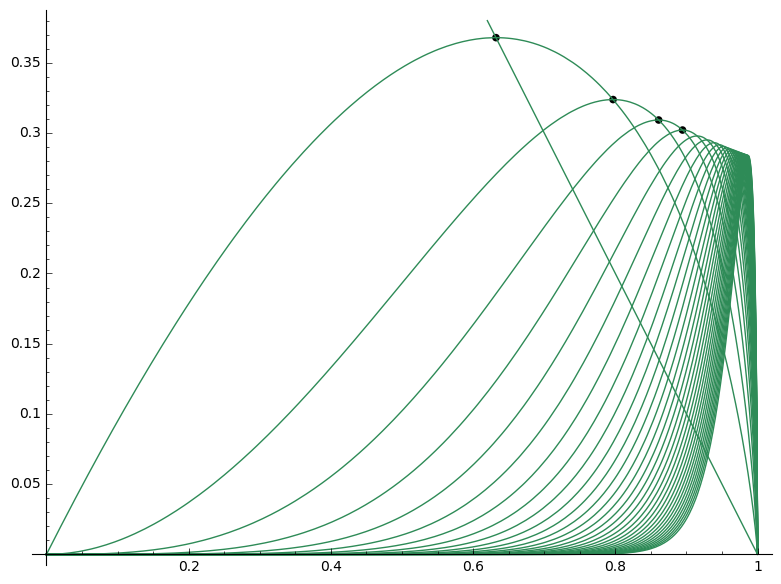}
\end{center}
\caption{The first few asymptotic curves $P_0, P_1, \ldots$}\label{f:curves}
\end{figure}

\begin{lemma}\label{l:inter}
For each $i$, the intersection of $P_{i-1}$ and $P_i$ coincides with the maximum value of $P_i$.  
\end{lemma}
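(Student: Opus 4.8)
The plan is to reduce both sides of the asserted coincidence to the same transcendental equation in $\t$. Write $S_i(\t) = \sum_{k \geq i} \t^k/k$, so that $P_i(\t) = i(1-\t)S_i(\t)$ for $i \geq 1$ (with $S_1(\t) = -\ln(1-\t)$) and $P_0(\t) = 1-\t$. First I would differentiate: summing the geometric series gives $S_i'(\t) = \sum_{k \geq i}\t^{k-1} = \t^{i-1}/(1-\t)$, so the product rule yields
\[ P_i'(\t) = i\bigl(\t^{i-1} - S_i(\t)\bigr). \]
Hence the critical points of $P_i$ in $(0,1)$ are precisely the solutions of $\t^{i-1} = S_i(\t)$.

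Next I would pin down the maximum. One checks that $P_i$ is continuous on $[0,1]$ with $P_i(0) = 0$ and $P_i(\t) \to 0$ as $\t \to 1^-$ (since $0 \le S_i(\t) \le -\ln(1-\t)$ and $(1-\t)(-\ln(1-\t)) \to 0$), while $P_i > 0$ on $(0,1)$; so the maximum is attained in the interior. For uniqueness of the critical point, put $g_i(\t) = \t^{i-1} - S_i(\t)$; for $i \ge 2$ a short computation gives $g_i'(\t) = \t^{i-2}\bigl((i-1) - i\t\bigr)/(1-\t)$, so $g_i$ increases on $(0,(i-1)/i)$ and decreases on $((i-1)/i,1)$, with $g_i(0^+) = 0$ and $g_i(\t) \to -\infty$ as $\t \to 1^-$ (harmonic divergence). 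Thus $g_i$ has exactly one zero $\t_i^\ast$ in $(0,1)$, at which $P_i' = i g_i$ changes from positive to negative; so $P_i$ has a unique maximizer $\t_i^\ast$, characterized by $\t^{i-1} = S_i(\t)$. For $i = 1$, instead $g_1(\t) = 1 + \ln(1-\t)$ is strictly decreasing, giving the unique maximizer $\t_1^\ast = 1 - e^{-1}$.

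Finally I would compute the intersection. For $i \ge 2$ and $\t \in (0,1)$, cancelling the nonzero factor $1-\t$ turns $P_{i-1}(\t) = P_i(\t)$ into $(i-1)S_{i-1}(\t) = iS_i(\t)$; substituting the identity $S_{i-1}(\t) = \t^{i-1}/(i-1) + S_i(\t)$ makes the left side equal to $\t^{i-1} + (i-1)S_i(\t)$, so the equation collapses to $\t^{i-1} = S_i(\t)$ — exactly the equation defining $\t_i^\ast$. Therefore $P_{i-1}$ and $P_i$ meet precisely at the maximizer of $P_i$, and the common value there is $\max_\t P_i(\t)$, as claimed. The case $i = 1$ is handled directly: $P_0(\t) = 1-\t$ equals $P_1(\t) = (1-\t)(-\ln(1-\t))$ iff $-\ln(1-\t) = 1$, i.e. $\t = 1 - e^{-1} = \t_1^\ast$.

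The main obstacle is not the key algebraic identity, which is a two-line manipulation once the derivative formula $P_i' = i(\t^{i-1} - S_i)$ is available, but the bookkeeping needed to make ``the maximum value of $P_i$'' well-defined — namely the monotonicity and sign analysis of $g_i$ establishing a unique interior maximum — together with the need to treat $i = 1$ as a separate case, since both $P_i = i(1-\t)S_i$ and the recursion $S_{i-1} = \t^{i-1}/(i-1) + S_i$ degenerate there.
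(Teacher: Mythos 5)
Your proof is correct and follows essentially the same route as the paper: both compute $P_i' = i\bigl(\t^{i-1} - S_i(\t)\bigr)$ and show that the equation $P_{i-1} = P_i$ reduces to the same condition $\t^{i-1} = S_i(\t)$ (the paper packages this as the identity $P_{i-1}-P_i = \frac{1-\t}{i}\,\frac{dP_i}{d\t}$). Your additional unimodality analysis of $g_i$, establishing that this common zero is the unique interior maximizer of $P_i$, is a worthwhile refinement that the paper's proof leaves implicit.
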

\begin{proof}
To see this, the derivative of $P_r$ with respect to $\t$ is 
		\[ r(1-\t) \sum_{i=r}^\infty \t^{i-1} - r \sum_{i=r}^\infty \frac{\t^i}{i} =  r(1-\t) \frac{\t^{r-1}}{1-\t} - r \sum_{i=r}^\infty \frac{\t^i}{i}   =  r \left(\t^{r-1} - \sum_{i=r}^\infty \frac{\t^i}{i}\right). \]
		whereas the successive differences $P_{r-1}-P_r$ are
		\[ (1-\t)\left( ( (r-1)-r) \sum_{i=r}^\infty \frac{\t^i}{i} \right) + (r-1)(1-\t)\frac{\t^{r-1}}{r-1} = (1-\t)\left( \t^{r-1} - \sum_{i=r}^\infty \frac{\t^i}{i} \right).\]
Hence, 
		\[ P_{r-1}-P_r = \frac{1-\t}{r} \frac{d P_r}{d\t} \]
so the successive differences and derivatives have the same zeros.
\end{proof}

The first intersection occurs at $\t = 1-1/e$ with value $P = 1/e$.  Subsequent intersections can be estimated numerically but have no elementary closed form:
\[ \begin{array}{|c|c|c|c|}
	\hline
	r & \frac{d}{d\t}P_r & \text{solution for $\frac{d}{d\t}P_r=0$} & \text{value of $P_r$} \\
	\hline
	1 & \ln(-\t + 1) + 1 & \t = 1-1/e \approx 0.63212 & 1/e \approx 0.36788 \\
	2 & 4\t + 2\ln(-\t + 1) & \t \approx 0.796812 & 0.323805 \\
	3 & 9/2\t^2 + 3\t + 3\ln(-\t + 1) & \t \approx 0.860917 & 0.309256 \\
	4 & 16/3\t^3 + 2\t^2 + 4\t + 4\ln(-\t + 1) & \t \approx 0.894457 & 0.302113 \\
	5 & 25/4\t^4 + 5/3\t^3 + 5/2\t^2 + 5\t + 5\ln(-\t + 1) & \t \approx 0.915009 & 0.297883 \\
	\hline
\end{array} \]

Thus, the optimal strategy and probability of success is determined by
$P(\theta) = \max\limits_{r \geq 0} P_r(\theta)$, the maximum $P_r$ function in
the regime determined by $\theta$.  By Lemma~\ref{l:inter}, $P(\theta)$ is
monotonically decreasing and bounded below.  Hence, there is a limiting value
as $\t \ra 1$.  However, the limit is clearly bounded away from $1/e$, which is
the value {\em at } $\t = 1$ according to the classical analysis.  Our goal in
this section is to determine $\lim_{\theta \rightarrow 1} P(\theta)$ more precisely.

\begin{figure}[t]
\begin{center}
\includegraphics[scale=0.4]{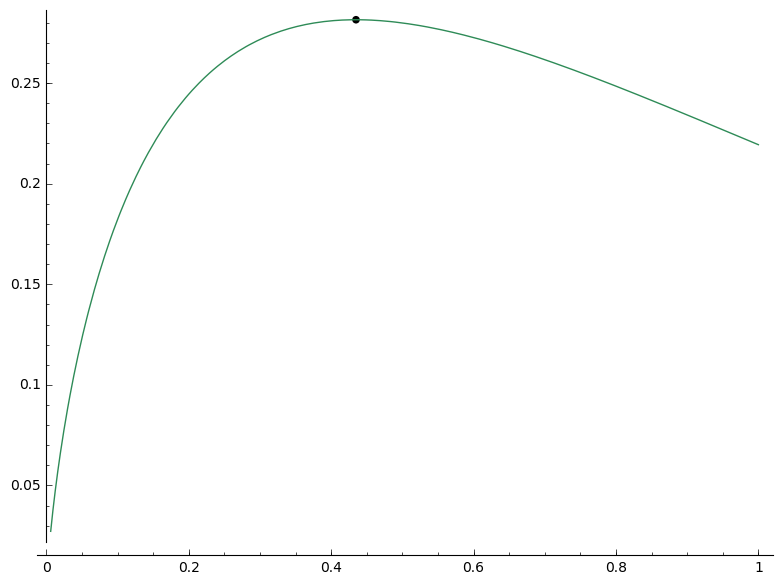}
\end{center}
\caption{The maximum $(\alpha, \beta)$ for $x E_1(x)$}\label{f:xe1x}
\end{figure}

Recall the exponential integral
\[ E_1(x) = \int_{x}^\infty \frac{e^{-t}}{t} \ dt \]
which we view as a function of a positive real variable $x$ (see e.g. \cite{NIST}).
This is a standard special function implemented in many mathematical software
systems.

For our main result, we consider the maximum value attained by the related
function $F(x) = x E_1(x)$ on $(0, \infty)$; see Figure~\ref{f:xe1x} for a
plot.  Although there is no elementary form for this maximum, it occurs where
$E_1(x) = e^{-x}$ so can be estimated numerically to arbitrary precision.  Let
$\alpha$ and $\beta$ be defined by $F'(\alpha) = 0$ and $F(\alpha) = \beta$.
Then, $\alpha \approx 0.43481821500399293$ and $\beta \approx
0.28149362995691674$.


We are now in a position to give our main result.

\begin{theorem}
As $\t$ approaches $1$ from the left, the optimal strategy in our asymptotic weighted game of best choice approaches a positional strategy that rejects $\frac{\alpha}{1-\t}$ initial candidates and selects the next candidate better than all of them.  This strategy has a success probability of $\beta$.
\end{theorem}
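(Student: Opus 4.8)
The plan is to use the monotonicity of $P(\theta)$ (recorded just after Lemma~\ref{l:inter}) to reduce the problem to the behaviour of $P_r$ at its maximum as $r\to\infty$, and then to transfer that discrete maximum to $\max F$ by a scaling limit that turns the partial sums $\sum_{i\ge r}\theta^i/i$ into the exponential integral $E_1$. By Lemma~\ref{l:inter}, $P_r$ attains its maximum at the point $\theta_r$ where it meets $P_{r-1}$, that is, where $\frac{dP_r}{d\theta}=0$; by the formula $\frac{dP_r}{d\theta}=r\bigl(\theta^{r-1}-\sum_{i\ge r}\theta^i/i\bigr)$ from that proof, $\theta_r$ is the root in $(0,1)$ of $\theta^{r-1}=\sum_{i\ge r}\theta^i/i$. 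Since $\sum_{i\ge r}\theta^i/i\le\frac1r\sum_{i\ge r}\theta^i=\frac{\theta^r}{r(1-\theta)}$, we have $P_r(\theta)\le\theta^r\to0$ for each fixed $\theta<1$, so some finite $r$ is optimal for every $\theta$; hence the optimality regimes $[\theta_r,\theta_{r+1}]$ partition $(0,1)$ in increasing order of $r$ (as in the table following Figure~\ref{f:curves}), so $\theta_r\nearrow1$. As $P$ is decreasing and $P(\theta_r)=P_r(\theta_r)$, we get $L:=\lim_{\theta\to1^-}P(\theta)=\lim_{r\to\infty}P_r(\theta_r)$.

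The key estimate is a scaling limit. Writing $\epsilon=1-\theta$ and using $\sum_{i\ge r}\theta^i/i=\int_0^\theta \frac{t^{r-1}}{1-t}\,dt$ together with the substitution $t=1-\epsilon w$,
\[ P_r(1-\epsilon)\;=\;r\epsilon\int_1^{1/\epsilon}\frac{(1-\epsilon w)^{r-1}}{w}\,dw. \]
If $\epsilon\to0^+$ along any sequence with $r\epsilon\to x\in(0,\infty)$, then $(1-\epsilon w)^{r-1}\to e^{-xw}$ pointwise and, for $r$ large, is dominated by $e^{-xw/2}/w$; dominated convergence and the substitution $v=xw$ give $P_r(1-\epsilon)\to x\int_x^\infty e^{-v}/v\,dv=xE_1(x)=F(x)$. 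Applying this with $x=\alpha$ and $r(\theta)=\lceil\alpha/(1-\theta)\rceil$ shows $\liminf_{\theta\to1^-}P(\theta)\ge P_{r(\theta)}(\theta)\to F(\alpha)=\beta$, so $L\ge\beta>0$.

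Now set $\epsilon_r=1-\theta_r$ and $x_r=r\epsilon_r$. From $P_r(\theta_r)\le\theta_r^r\le e^{-x_r}$ and $P_r(\theta_r)\to L>0$ we see that $(x_r)$ is bounded; since $\epsilon_r\to0$, a short expansion gives $\theta_r^{r-1}=e^{-(r-1)\epsilon_r+O((r-1)\epsilon_r^2)}=e^{-x_r}(1+o(1))$, and the defining equation yields $P_r(\theta_r)=x_r\theta_r^{r-1}=x_re^{-x_r}(1+o(1))$. Along any subsequence on which $x_r\to x^*$, necessarily $x^*\in(0,\infty)$ because $x^*e^{-x^*}=L>0$; moreover the argument of the previous paragraph applied to $(\theta_r)$ gives $\sum_{i\ge r}\theta_r^i/i\to E_1(x^*)$, so the defining equation forces $e^{-x^*}=E_1(x^*)$, i.e. $F'(x^*)=0$. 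But $F'(x)=E_1(x)-e^{-x}$ has $F''(x)=e^{-x}(1-1/x)$, so $F'$ decreases on $(0,1)$ and increases on $(1,\infty)$ with $F'(0^+)=+\infty$, $F'(1)<0$, and $F'(x)\to0^-$ as $x\to\infty$; hence $F'$ vanishes exactly once, at $\alpha$. Therefore every subsequential limit of $(x_r)$ equals $\alpha$, so $x_r\to\alpha$ and $L=\lim_r x_re^{-x_r}=\alpha e^{-\alpha}=\alpha E_1(\alpha)=F(\alpha)=\beta$. Finally, on the $r$-th regime the optimal number of rejections is $r$, and there $r(1-\theta)\in[\tfrac{r}{r+1}x_{r+1},\,x_r]\to\alpha$, so the optimal rejection count $r^*(\theta)$ satisfies $r^*(\theta)(1-\theta)\to\alpha$; that is, it is asymptotic to $\alpha/(1-\theta)$, as claimed.

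The main obstacle is the scaling limit of the second paragraph together with the compactness argument of the third: one must pick the correct regime $r\sim x/(1-\theta)$, justify the passage to $E_1$ by dominated convergence, extract boundedness of $(x_r)$ from the elementary bound $P_r(\theta)\le\theta^r$, and carry out the sign analysis of $F'=E_1-e^{-x}$ to see that $\alpha$ is its only zero. The monotonicity from Lemma~\ref{l:inter} is what lets these local facts assemble into the stated global limit.
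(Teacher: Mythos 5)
Your proof is correct in substance but follows a genuinely different route from the paper's. The paper sandwiches $\sum_{i\ge r}\theta^i/i$ between the integrals $\int_r^\infty \theta^t/t\,dt$ and $\int_{r-1}^\infty \theta^t/t\,dt$, bounds the error by $4(1-\theta)\theta^r$, substitutes $u=(1-\theta)t$, and sends $\theta\to1$ using $\theta^{1/(1-\theta)}\to e^{-1}$ to land on $cE_1(c)$; it then simply declares that optimizing $cE_1(c)$ yields the optimal strategy and success probability. You instead use the exact representation $\sum_{i\ge r}\theta^i/i=\int_0^\theta t^{r-1}/(1-t)\,dt$, the substitution $t=1-\epsilon w$, and dominated convergence under the scaling $r(1-\theta)\to x$, and then add a compactness argument on the critical points $\theta_r$ (via the first-order condition $\theta^{r-1}=\sum_{i\ge r}\theta^i/i$ from Lemma~\ref{l:inter}) together with a sign analysis of $F'=E_1-e^{-x}$ showing $\alpha$ is the unique critical point. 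What your route buys is precisely the step the paper glosses over: the justification that the maximizers and maxima of the prelimit functions converge to those of the limit function $F$, rather than merely that the functions converge pointwise. What the paper's route buys is brevity and an explicit quantitative error bound.

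One soft spot: your identity $L=\lim_r P_r(\theta_r)$ and the final asymptotic $r^*(\theta)(1-\theta)\to\alpha$ both rest on the claim that the optimality regimes are the ordered intervals $[\theta_r,\theta_{r+1}]$, which you justify only by pointing to the table. This is not hard to close --- the expression $\theta^{r-1}-\sum_{i\ge r}\theta^i/i$ from the proof of Lemma~\ref{l:inter} equals $\theta^{r-1}\bigl(1-\sum_{i\ge r}\theta^{i-r+1}/i\bigr)$, and the parenthesized factor decreases strictly from $1$ to $-\infty$ on $(0,1)$, giving the single-crossing property from which the interval structure follows --- but as written it is asserted rather than proved. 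The paper leans on the same unproved structure (its claim that $P(\theta)$ is monotonically decreasing), so this does not put you below the paper's own standard of rigor.
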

\begin{proof}
We would like to optimize
$P_r(\t) = r (1-\t) \sum_{i=r}^{\infty} \frac{\t^i}{i}$
for large $r$ and $\t$ chosen appropriately close to $1$.
We estimate the series by viewing it as a left or right sum for the corresponding integrals:
\[ \int_{t=r}^\infty \frac{\t^t}{t} \ dt < \sum_{i=r}^{\infty} \frac{\t^i}{i} < \int_{t=r}^\infty \frac{\t^{t-1}}{t-1} \ dt = \int_{t=r-1}^\infty \frac{\t^{t}}{t} \ dt. \]
Hence, we may approximate $P_r(\t)$ by
$\widetilde{P}_r(\t) = r (1-\t) \int_{t=r}^{\infty} \frac{\t^t}{t} \ dt$
with error less than 
\[ r(1-\t) \frac{\t^{r-1}}{r-1} < 4 (1-\t) \t^r \]
since the integrand is decreasing, $r-1 > r/2$, and $\t > 1/2$.

Next, we change variables from $r$ to $c = (1-\t)r$, and from $t$ to $u = (1-\t)t$ in the integral.
We obtain $du = (1-\t)\ dt$ so
\[ \widetilde{P}_c(\t) = c \int_{u=c}^{\infty} \frac{\left(\t^{1/(1-\t)}\right)^u}{u} \ du. \]
and our error estimate for $|P - \widetilde{P}|$ becomes $4\t^{c/(1-\t)} (1-\t)$.

Now, we are in a position to take the limit as $\t \rightarrow 1$, using $\lim_{\t \ra 1} \t^{1/(1-\t)} = 1/e$.  This forces $P \rightarrow \widetilde{P}$ by our error estimate, and 
\[ \widetilde{P}_c \rightarrow c \int_{u=c}^{\infty} \frac{e^{-u}}{u} \ du = c E_1(c). \]
Optimizing this function for $c \in (0, \infty)$ then determines the asymptotically optimal positional strategy (where we reject $r = \frac{c}{1-\t}$ initial candidates) and probability of success.
\end{proof}

We can also solve the model when $\t > 1$.  One interpretation here is that
there is some ``trend'' in the candidate pool (e.g. due to changes in general
economic conditions such as unemployment or interest rates) that is amplifying
the probability of seeing the best candidate later.  Once again, we find that
including even an infinitesimal trend completely changes the optimal asymptotic
strategy.

\begin{theorem}
If $\t > 1$, the probability of success for the strategy that initially rejects $r = \frac{N}{\la}$ candidates approaches $1/\la$, as $N \ra \infty$.  Hence, the asymptotic model does not depend on $\t$.
\end{theorem}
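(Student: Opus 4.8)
The plan is to work directly from the closed form in Theorem~\ref{t:succN}. Since $\t > 1$, both $1-\t$ and $1-\t^N$ are negative, so for $r \geq 1$ we may rewrite the success probability of the $r$-positional strategy as
\[ P_r(N,\t) = \frac{r(\t-1)}{\t^N-1}\sum_{i=r}^{N-1}\frac{\t^i}{i}. \]
We take $r = \lfloor N/\la \rfloor$ with $\la > 1$ fixed, so that $r \geq 1$ for large $N$, $r/N \to 1/\la$, and, crucially, $N - r = \Theta(N) \to \infty$. Everything then reduces to pinning down the asymptotics of the sum $S_N := \sum_{i=r}^{N-1}\t^i/i$.

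The key point is that, because $\t>1$, the sum $S_N$ is geometrically dominated by its terms with $i$ close to $N$, where the factor $1/i$ is essentially the constant $1/N$. To exploit this I would fix a cutoff $K = K_N$ with $K_N \to \infty$ and $K_N / N \to 0$ (say $K_N = \lfloor \sqrt{N}\rfloor$); for $N$ large this satisfies $r < N - K_N$, so we may split $S_N = S_N^{\mathrm{top}} + S_N^{\mathrm{bot}}$ with $S_N^{\mathrm{top}} = \sum_{i=N-K_N}^{N-1}\t^i/i$ and $S_N^{\mathrm{bot}} = \sum_{i=r}^{N-K_N-1}\t^i/i$. For the top piece, bound $1/i$ between $1/(N-1)$ and $1/(N-K_N)$ — both equal to $(1+o(1))/N$ since $K_N/N \to 0$ — and evaluate the geometric sum $\sum_{i=N-K_N}^{N-1}\t^i = \tfrac{\t^N}{\t-1}(1-\t^{-K_N}) = (1+o(1))\tfrac{\t^N}{\t-1}$, giving $S_N^{\mathrm{top}} = (1+o(1))\tfrac{\t^N}{(\t-1)N}$. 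For the bottom piece, the crude bound $1/i \le 1/r$ together with $\sum_{i=r}^{N-K_N-1}\t^i \le \tfrac{\t^{N-K_N}}{\t-1}$ yields $S_N^{\mathrm{bot}} \le \tfrac{\t^N}{(\t-1)N}\cdot\tfrac{N\t^{-K_N}}{r}$, which is $o\!\left(\tfrac{\t^N}{(\t-1)N}\right)$ because $N/r \to \la$ and $\t^{-K_N}\to 0$. Squeezing between $S_N^{\mathrm{top}}$ and $S_N^{\mathrm{top}} + S_N^{\mathrm{bot}}$ gives $S_N = (1+o(1))\tfrac{\t^N}{(\t-1)N}$.

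Substituting back,
\[ P_r(N,\t) = \frac{r(\t-1)}{\t^N-1}\cdot(1+o(1))\frac{\t^N}{(\t-1)N} = (1+o(1))\,\frac{r}{N}\cdot\frac{\t^N}{\t^N-1} \;\longrightarrow\; \frac{1}{\la}, \]
since $r/N \to 1/\la$ and $\t^N/(\t^N-1)\to 1$; the limit does not involve $\t$, which is exactly the claimed conclusion.

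The main obstacle — and the reason this is not a one-line estimate — is that the naive global bound $1/i \le 1/r$ applied to all of $S_N$ only yields $\limsup_N P_r(N,\t) \le 1$, far too weak; one genuinely has to isolate the geometric "tail near the top", where $1/i$ is asymptotically the constant $1/N$, from the remainder. Choosing the cutoff $K_N$ in the window between $\omega(1)$ and $o(N)$ is precisely what makes the two halves of this separation succeed at once, and the strict hypothesis $\la > 1$ is what guarantees there is room below $r$ to place such a cutoff.
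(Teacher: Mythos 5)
Your argument is correct, and it reaches the key asymptotic $\sum_{i=r}^{N-1}\t^i/i \sim \t^N/((\t-1)N)$ by a genuinely different route than the paper. The paper's proof applies an exact ``almost telescoping'' (Abel summation) identity,
\[ (1-\t) \sum_{i=N/\la}^{N-1} \frac{\t^i}{i} \;=\; \frac{\la}{N} \t^{N/\la} - \frac{\t^{N}}{N-1} - \sum_{i=(N/\la) + 1}^{N-1} \frac{\t^i}{i(i-1)}, \]
divides by $-\t^N$, and observes that only the middle term survives in the limit; the whole sum is repackaged exactly and the dominant term is read off. You instead split the sum at a cutoff $N-K_N$ with $K_N\to\infty$, $K_N/N\to 0$, freeze $1/i\approx 1/N$ on the top block, evaluate the geometric sum there, and show the bottom block is geometrically negligible. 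Both proofs rest on the same underlying phenomenon --- for $\t>1$ the sum is dominated by the terms with $i$ near $N$ --- but your decomposition is different and arguably more self-contained: the paper's ``take the limit term by term'' step still tacitly requires a uniform bound on the tail sum $\sum \t^{i-N}/(i(i-1))$ times $N$, which is essentially the estimate you carry out explicitly for your bottom block. Your version also handles the integrality of $r=\lfloor N/\la\rfloor$ and records where $\la>1$ is used (to fit the cutoff between $r$ and $N$), details the paper elides. The trade-off is that the telescoping identity is shorter and exact, while your two-block estimate is longer but makes every error term visible.
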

\begin{proof}
Recall Theorem~\ref{t:succN}; we claim
\[ \lim_{N \ra \infty} P_{N/\la}(N, \t) = \lim_{N \ra \infty} \frac{N (1-\t)}{\la(1-\t^N)} \sum_{i=N/\la}^{N-1} \frac{\t^i}{i} = \frac{1}{\la}. \]

To see why, consider the ``almost telescoping'' sum
\[ (1-\t) \sum_{i=N/\la}^{N-1} \frac{\t^i}{i} = \frac{1}{N/\la} \t^{N/\la} - \frac{1}{N-1} \t^{N} - \sum_{i=(N/\la) + 1}^{N-1} \frac{1}{i(i-1)} \t^i. \]
If we divide by $-\t^N$ and take the limit $N \ra \infty$ term by term with $\t > 1$, we find that only the leading (i.e. middle) term survives.  
Hence, our limit is $\lim_{N \rightarrow \infty} \frac{N}{\la(1-\t^N)} \left( -\frac{1}{N-1} \t^N \right) = \frac{1}{\la}$.
\end{proof}

Thus, for large $N$ we find that it is optimal to choose the last candidate (and win almost all of the time!), obtaining another discontinuity with the $\t < 1$ and classical models.

\bigskip
\section*{Addendum}

{\footnotesize Although we were unaware of it until after our work was accepted for publication, the paper \cite{rasm} solves a very similar problem to the one we are considering.  More specifically, we compute in Theorem~\ref{t:succN} the probability of winning the game of best choice under a non-uniform distribution whereas Rasmussen--Pliska compute in their Equation (2.6) the expected value of a random variable representing the non-uniform payoff for the game played on a uniform distribution.  For any particular $N$ and $\theta$, these problems are dual to each other in the sense that their corresponding formulas are off by the multiplicative constant $\frac{\t+\t^2+\t^3+...+\t^N}{N}$.  Since our weights form a probability distribution, we believe our model facilitates a clearer comparison with the classical secretary problem.

When $0 < \theta < 1$, Rasmussen--Pliska obtain an asymptotic estimate for the optimal strategy that agrees with ours, using different methods.  They also note that, for fixed $\theta < 1$, their expected payoff tends to $0$ as $N$ tends to infinity (because their denominator does not scale with $\theta$).  By contrast, our model has nonzero probabilities, as $N$ tends to infinity, given by the value of $P_r(\theta)$ where $r$ is optimal for the fixed $\theta$.}


\bigskip
\section*{Acknowledgements}

This material is based upon work supported by the National Science Foundation
under Grant Number 1560151.  It was initiated during the summer of 2018 at a
research experience for undergraduates (REU) site at James Madison
University mentored by the second and fourth authors.



\end{document}